\newlist{enumalpha}{enumerate}{1}
\setlist[enumalpha, 1]{
label=(\alph*)
}
\let\C\undefined
\newtheorem{theorem}{Theorem}[section]
\newtheorem{lemma}[theorem]{Lemma}
\newtheorem{corollary}[theorem]{Corollary}
\newtheorem{definition}[theorem]{Definition}
\theoremstyle{remark}
\newtheorem{example}[theorem]{Example}
\newtheorem{remark}[theorem]{Remark}
\numberwithin{equation}{section}
\DeclareMathOperator{\jump}{jmp}
\DeclareMathOperator{\lastjump}{lstjmp}
\DeclareMathOperator{\asc}{asc}
\DeclareMathOperator{\inv}{inv}
\DeclareMathOperator{\sep}{sep}
\newcommand{\Fasc}{F^{\asc}}
\newcommand{\Fcond}{F^{\cond}}
\newcommand{\Fdisc}{F^{\disc}}
\renewcommand{\Finv}{F^{\inv}}
\author{Fabian Gundlach}
\address{Universität Paderborn, Fakultät EIM, Institut für Mathematik, Warburger Str.~100, 33098 Paderborn, Germany.}
\email{fabian.gundlach@uni-paderborn.de}
\title{Counting abelian extensions by Artin--Schreier conductor}
\subjclass{11R45, 11R37, 11S40, 30B10}
\begin{document}

\begin{abstract}
Let $G$ be a finite abelian $p$-group. We count étale $G$-extensions of global rational function fields $\mathbb F_q(T)$ of characteristic $p$ by the degree of what we call their Artin--Schreier conductor. The corresponding (ordinary) generating function turns out to be rational. This gives an exact answer to the counting problem, and seems to beg for a geometric interpretation.

This is in contrast with the generating functions for the ordinary conductor (from class field theory) and the discriminant, which in general have no meromorphic continuation to the entire complex plane.

\smallskip
\noindent\textbf{Keywords.} Asymptotics of Galois groups, global function fields, class field theory, meromorphic continuations
\end{abstract}

\maketitle

\setcounter{tocdepth}{1}
\tableofcontents

\section{Introduction}

Let $G$ be a finite abelian group and let $K$ be a local or global field of characteristic~$p$. Let $K^{\sep}$ be the separable closure of $K$.

In this paper, the term \emph{$G$-extension} will refer to what is often called an étale $G$-extension, a sub-$G$-extension, or a $G$-torsor. (See \Cref{subsn:gextensions} for the definition.) Since $G$ is abelian, isomorphism classes of $G$-extensions are in bijection with continuous (not necessarily surjective) homomorphisms from the absolute Galois group of $K$ to $G$.

There are several natural divisors one can associate to an (étale) $G$-extension $L$ of $K$, such as the discriminant divisor $\disc(L)$, or the conductor (in the sense of class field theory) $\cond(L)$. It is natural to ask for the number of $G$-extensions of $K$ such that the discriminant divisor (or the conductor divisor) has a particular degree $n$.

If the characteristic $p$ does not divide the order of $G$, then places of $K$ can only ramify tamely in $L$. In this case, if $K$ is a local field, there are only finitely many $G$-extensions of $K$ altogether. If $K$ is a global field, Wright \cite{wright-counting-abelian-extensions} gave an asymptotic formula for the number of extensions~$L$ with $\deg(\disc(L)) \leq n$ as $n$ goes to infinity.

Instead, assume from now on that $G$ is an abelian $p$-group. In this ``totally wild'' (nowhere tamely ramified) case, asymptotics for the number of $G$-extensions have been computed under the following circumstances:
\begin{itemize}
\item For local fields, counting by conductor. (See Klüners and Müller \cite{klueners-mueller-conductor-density-local-fields}.)
\item For global fields, counting by conductor. (See Lagemann \cite{lagemann-artin-schreier}, \cite{lagemann-artin-schreier-witt}.)
\item For local fields, counting by discriminant. (See Lagemann \cite{lagemann-thesis}.)
\item For local and global fields, counting by discriminant, assuming the group $G$ is elementary abelian. (See Potthast \cite{potthast-elementary-abelian}.)
\end{itemize}

In this paper, we associate to any $G$-extension $L$ another effective divisor, which we call the \emph{Artin--Schreier conductor} $\asc(L)$. (See \Cref{def:asc}. The Artin--Schreier conductor divisor can be obtained from the ordinary conductor divisor by simply reducing the multiplicity of each ramified point by $1$.)
We explicitly compute the corresponding (ordinary) generating function
\[
\Fasc_K(X)
:= \frac1{|G|}\sum_{\substack{L\textnormal{ (étale) $G$-extension of }K\\\textnormal{up to isomorphism}}} X^{\deg(\asc(L))}
= \frac1{|G|}\sum_{n\geq0} |\{L : \deg(\asc(L)) = n\}| \cdot X^n
\in \Z\llbracket X\rrbracket
\]
for rational function fields $K$:
\begin{theorem}[see \Cref{thm:main}\ref{item:main-global-generating-function}]
\label{thm:main-intro}
Let $G=\prod_{e\geq1}C_{p^e}^{m_e}$ be a finite abelian $p$-group. Define
\[
\textstyle
c_0 := 0,
\qquad
c_1 := \sum_{e\geq1} m_e = \dim_{\F_p}(G[p]),
\qquad
c_{i+1} := c_i - m_i p^{-i}\textnormal{ for }i\geq1,
\]
or, equivalently, for $i\geq1$,
\[
c_i
:= \sum_{e=1}^\infty m_e - \sum_{e=1}^{i-1}m_ep^{-e}
= \sum_{j=1}^{i-1}(p-1)p^{-j}r_j + p^{1-i}r_i,
\qquad\textnormal{where } r_i := \dim_{\F_p}(G[p^i]/G[p^{i-1}]).%\qquad\textnormal{ for }i\geq1.
\]
For any rational function field $K=\F_q(T)$ of characteristic~$p$, we have
\[
\Fasc_K(X)
= \prod_{i=0}^\infty\frac{Z_K\big((q^{c_{i+1}}X)^{p^i}\big)}{Z_K\big((q^{c_i}X)^{p^i}\big)},
\]
where
\[
Z_K(X) = \frac{1}{(1-X)(1-qX)}
\]
is the Hasse--Weil zeta function of~$\vP^1_{\F_q}$.
\end{theorem}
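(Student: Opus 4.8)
The plan is to reduce the global count to an Euler product of local counts by class field theory, and then to evaluate each local factor explicitly. Since $G$ is abelian, $\frac1{|G|}\sum_L X^{\deg\asc(L)}=\frac1{|G|}\sum_{\chi}X^{\deg\asc(\chi)}$, where $\chi$ ranges over $\operatorname{Hom}(\mathbb{A}_K^\times/K^\times,G)$ with local components $\chi_v$, and $\deg\asc(\chi)=\sum_v a_v(\chi_v)\deg v$ is a sum of local terms. For $K=\F_q(T)=\vP^1_{\F_q}$ the idèle class group is simple: the degree map splits (there is a rational place), the class number is $1$, and the global units are $\F_q^\times$, so $\mathbb{A}_K^\times/K^\times\cong\Z\times\big(\prod_v\mathcal{O}_v^\times\big)/\F_q^\times$. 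Because $G$ is a $p$-group while each residue unit group $\F_{N_v}^\times$ (here $N_v=q^{\deg v}$) has order prime to $p$, every $\chi_v$ kills the Teichmüller part and factors through the principal units $U_v^{(1)}$; the diagonal $\F_q^\times$-condition becomes vacuous, and the $\Z$-factor contributes exactly the $|G|$ unramified constant-field extensions that $\tfrac1{|G|}$ divides out. This should yield
\[
\Fasc_K(X)=\prod_v\Phi_v(X),\qquad \Phi_v(X)=\sum_{\chi_v\in\operatorname{Hom}(U_v^{(1)},G)}X^{a_v(\chi_v)\deg v},
\]
where $a_v(\chi_v)$, the local Artin--Schreier conductor exponent, is the least $n\ge0$ with $\chi_v|_{U_v^{(n+1)}}=1$ (the ordinary conductor exponent minus one, and $0$ when $\chi_v$ is unramified).

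To compute $\Phi_v$, set $H_n=\operatorname{Hom}(U_v^{(1)}/U_v^{(n+1)},G)$, the characters of depth $\le n$; a short telescoping gives $\Phi_v(X)=(1-X^{\deg v})\sum_{n\ge0}|H_n|\,X^{n\deg v}$. For the local field $\F_{N_v}((\pi))$ the principal units are, as a topological $\Z_p$-module, freely generated by the $1+a\pi^m$ with $p\nmid m$ and $a$ running through an $\F_p$-basis of $\F_{N_v}$, whence
\[
U_v^{(1)}/U_v^{(n+1)}\cong\bigoplus_{m\ge1,\,p\nmid m}\big(\Z/p^{k(m,n)}\big)^{D_v},\qquad k(m,n)=\#\{j\ge0:mp^j\le n\},
\]
with $D_v=\dim_{\F_p}\F_{N_v}=\log_p N_v$. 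Combined with the elementary count $|\operatorname{Hom}(\Z/p^a,G)|=p^{\sum_e m_e\min(a,e)}$, this gives $|H_n|=N_v^{S(n)}$, where $S(n)=\sum_{p\nmid m}h(k(m,n))$ and $h(k)=\sum_e m_e\min(k,e)$.

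The final step turns $\sum_n N_v^{S(n)}X^{n\deg v}$ into a product. Writing $n=\sum_j d_jp^j$ in base $p$ and using $\lfloor n/p^{k-1}\rfloor=d_{k-1}+p\lfloor n/p^k\rfloor$, one shows that $S(n)$ is \emph{linear in the digits}: $S(n)=\sum_{j\ge0}\sigma_j d_j$ with $\sigma_j=M_{j+1}+(p-1)\sum_{k=1}^j M_kp^{j-k}$ and $M_k=\sum_{e\ge k}m_e$. Linearity makes the sum factor over digit positions,
\[
\sum_{n\ge0}N_v^{S(n)}X^{n\deg v}=\prod_{j\ge0}\frac{1-N_v^{p\sigma_j}X^{p^{j+1}\deg v}}{1-N_v^{\sigma_j}X^{p^j\deg v}},
\]
and multiplying by $(1-X^{\deg v})$, taking the product over all $v$ via $Z_K(Y)=\prod_v(1-Y^{\deg v})^{-1}$, and reindexing reproduces $\prod_{i\ge0}Z_K((q^{c_{i+1}}X)^{p^i})/Z_K((q^{c_i}X)^{p^i})$ — provided one verifies the single algebraic identity $\sigma_j=c_{j+1}p^j$, which follows by expanding both sides as linear forms in the $m_e$.

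The conceptual heart, and the point I expect to be most delicate, is recognizing that the depth exponent $S(n)$ depends only \emph{linearly} on the base-$p$ digits of $n$; this linearity is precisely what forces the generating function to collapse into a product of zeta values, with the $c_i$ appearing as its (rescaled) digit-coefficients. The remaining care is in the class-field-theoretic reduction to a clean Euler product — in particular matching the conductor from class field theory with the paper's $\asc$ and checking that the constant-field factor exactly cancels the normalization — and in the finite-abelian-$p$-group hom count underlying $|H_n|$.
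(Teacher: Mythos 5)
Your proposal is correct and follows essentially the same route as the paper: reduction to an Euler product via the idèle class group of $\F_q(T)$ (with the $\Z$-factor cancelling the $1/|G|$ and the prime-to-$p$ parts killed), the description of $U_v^{(1)}$ as a free $\Z_p$-module on generators $1+a\pi^m$ with $p\nmid m$, the count $|H_n|=N_v^{S(n)}$ with $S(n)=\sum_e m_e(n-\lfloor n/p^e\rfloor)$, and the base-$p$ digit linearity $S(n)=\sum_j c_{j+1}p^j d_j$ that makes the sum factor into the telescoping product of zeta values. Your identity $\sigma_j=c_{j+1}p^j$ checks out, so the argument is complete as outlined.
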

Note that $m_i=0$ and hence $c_{i+1}=c_i$ for all sufficiently large $i$. Thus, all but finitely many factors in the infinite product are $1$. In particular, $\Fasc_K(X)$ is a rational function, so one can obtain not only an \emph{asymptotic} formula for the number $a_n$ of $G$-extensions $L$ of $K$ with $\deg(\asc(L))=n$ (see \Cref{cor:poles-and-asymptotics}\ref{item:poles-and-asymptotics-global}), but a recurrence relation (and in fact an \emph{exact} formula) for the numbers~$a_n$.

Using inclusion--exclusion, one can show that the generating functions counting only those $G$-extensions of $K=\F_q(T)$ which are fields (or, equivalently, counting surjective continuous group homomorphisms from the absolute Galois group of $K$ to $G$) are still rational. (See \Cref{rmk:only-fields}.)

The rationality of $\Fasc_K(X)$ could be a hint that there is a geometric interpretation of our result.
(See \cite{pries-zhu-p-rank-stratification} and \cite{dang-hippold-cyclic-covers-positive-characteristic} for a description of the moduli spaces of $C_{p^e}$-extensions with a given ramification filtration.)

It might be worth pointing out that the asymptotics are quite a bit ``simpler'' when counting extensions of $\F_q(T)$ by Artin--Schreier conductor than when counting by ordinary conductor or discriminant. (Compare \Cref{cor:poles-and-asymptotics}\ref{item:poles-and-asymptotics-global} to \cite[Theorem~1.2]{lagemann-artin-schreier-witt} and \cite[Theorem~5.30]{potthast-elementary-abelian}, respectively.)

The rationality of the generating function $\Fasc_K(X)$ is rather surprising:
generating functions naturally appearing in arithmetic statistics are rarely known (or even expected) to even admit a meromorphic continuation to the entire complex plane. Over function fields $K=\F_q(T)$, the author is aware of the following cases:
\begin{itemize}
\item
	The generating function counting $C_3$-extensions of $K=\F_3(T)$ by discriminant or by ordinary conductor does not have a meromorphic continuation to the entire complex plane.
	(See \Cref{sn:other-invariants}.)
\item
	If $G\neq1$ is a finite abelian group and the characteristic of $K$ does not divide $|G|$ (unlike what we assume throughout the rest of this paper), one can show that the generating function counting $G$-extensions of $K$ by discriminant, by ordinary conductor, or by sum of ramified points only has a meromorphic continuation to the entire complex plane if $G=C_2$.
	% The idea of the proof is as follows:
	% We can repeatedly meromorphically continue the generating function by dividing out appropriate powers of Artin L-function to eliminate the leading term.
	% Every Artin L-function will introduce poles or zeros and we can only obtain a meromorphic continuation to the entire complex plane if the process stops, i.e., the generating function is a (finite) product of powers of Artin L-functions.
	% In this case, the local generating functions would have to be products of cyclotomic polynomials. In particular, they would have to be reciprocal polynomials. But for $G\neq C_2$, for any point whose degree is divisible by $|G|$, the local factor cannot be reciprocal as its constant coefficient is $1$ and its leading coefficient is larger than $1$.
\item
	The Shintani zeta function counting $\SL_2(\F_q[T])$-orbits of binary cubic forms with coefficients in $\F_q[T]$ by discriminant is rational. A similar statement holds for adelic Shintani zeta functions, which essentially allow imposing a finite number of congruence conditions on the cubic forms. (See \cite{datskovsky-adelic-zeta-over-function-field}.) However, despite Levi's parameterization of cubic extensions by cubic forms, it seems doubtful whether the generating function counting cubic extensions of $\F_q(T)$ by discriminant would be rational (or even have a meromorphic continuation to the entire complex plane), essentially because one would need to impose infinitely many congruence conditions.
\end{itemize}

\textbf{Acknowledgements.}
This work was supported by the Deutsche Forschungsgemeinschaft (DFG, German Research Foundation) --- Project-ID 491392403 --- TRR 358 (project A4).

The author would like to thank Kiran Kedlaya, Jürgen Klüners, Nicolas Potthast, and Béranger Seguin for helpful discussions and comments on an earlier draft.
Moreover, the author is grateful to the anonymous referees for their careful reading and helpful comments and corrections.

\section{Definitions}
\label{sn:definitions}

\subsection{$G$-extensions}
\label{subsn:gextensions}

Let $G$ be a finite group and let $K$ be a field.

A \emph{$G$-extension} $L$ of $K$ is an étale $K$-algebra~$L$ together with an action of $G$ such that there is a $G$-equivariant $K^{\sep}$-algebra isomorphism between $L\otimes_K K^{\sep}$ and the ring of maps $G\to K^{\sep}$, on which $G$ acts by $(g.f)(h) = f(hg)$. An \emph{isomorphism} of $G$-extensions is a $G$-equivariant $K$-algebra isomorphism.

Isomorphism classes of $G$-extensions of $K$ are in bijection with $G$-conjugacy classes of continuous homomorphisms from the absolute Galois group $\Gamma_K$ to $G$.

Any Galois extension $L/K$ together with an isomorphism $i:\Gal(L/K)\stackrel\sim\to G$ is a $G$-extension, and the Galois extensions correspond to the surjective homomorphisms $\Gamma_K\to G$.

\subsection{Invariants}

Let $G$ be a finite abelian $p$-group and let $K$ be a local or global field of characteristic $p$ with field of constants $\F_q$. Let $M_K$ be the set of places of $K$. We denote the degree of a place $P$ by $\deg(P)$. (Any local field has only one place, and its degree is $1$.) For any place $P$, let $K_P$ be the corresponding completion of $K$, let $\O_P$ be its valuation ring, and let $Q_P = q^{\deg(P)}$ be the size of its residue field. We denote the $k$-th higher unit group by $U_P^k$.

For any (étale) $G$-extension $L$ of $K$ and any place $P$ of $K$, let $I^t_P(L|K) \subseteq G$ be the $t$-th ramification group in upper numbering. (Since $G$ is abelian, this group is independent of the choice of prime of $L$ above $P$.)

\begin{definition}
\label{def:asc}
The \emph{Artin--Schreier conductor} of $L$ is the divisor
\begin{align*}
\asc(L) := \sum_{P\in M_K} \lastjump_P(L) \cdot P,
\end{align*}
where
\begin{align*}
\lastjump_P(L)
:= \inf\{t\in\R_{\geq0} \mid I_P^t(L|K) = 1\}
\end{align*}
is the last jump (in upper numbering) in the ramification filtration associated to $L$.
\end{definition}

\begin{remark}
The last jump satisfies the following properties:
\begin{enumalpha}
\item By the Hasse--Arf theorem, $\lastjump_P(L)$ is an integer.
\item If $P$ is unramified in $L$, then $\lastjump_P(L)=0$. In particular, the above formula actually defines a divisor (with finite support). (For the ramified places, this number $\lastjump_P(L)$ agrees with what Potthast calls the Artin--Schreier conductor in \cite[Proposition~3.3]{potthast-elementary-abelian}.)
\item Conversely, if $P$ is ramified, it must be wildly ramified, so $\lastjump_P(L) > 0$. In this case, $I_P^{\lastjump_P(L)}(L|K) \neq 1$.
\end{enumalpha}
\end{remark}

The Artin--Schreier conductor is closely related to the \emph{ordinary conductor} from class field theory, which at the ramified places is off by one from the Artin--Schreier conductor:
\[
\cond(L)
:= \sum_{P\in M_K\textnormal{ ramified in }L} (\lastjump_P(L)+1) \cdot P.
\]

We write $\disc(L)$ for the relative \emph{discriminant} divisor of $L|K$.

\subsection{Generating functions}

For any $\inv\in\{\asc,\cond,\disc\}$, we define
\[
\Finv_K(X)
:= \frac1{|G|}\sum_{\substack{L\textnormal{ (étale) $G$-extension of }K\\\textnormal{up to isomorphism}}} X^{\deg(\inv(L))}
= \frac1{|G|}\sum_{n\geq0} |\{L : \deg(\inv(L)) = n\}| \cdot X^n.
\]
For the local field $K_P$ and Artin--Schreier conductor, this means
\[
\Fasc_{K_P}(X) = \frac{1}{|G|} \sum_{\substack{L\textnormal{ (étale) $G$-extension of }K_P\\\textnormal{up to isomorphism}}} X^{\lastjump_P(L)} = \frac{1}{|G|}\sum_{n\geq0}|\{L:\lastjump_P(L)=n\}|\cdot X^n.
\]

\section{Class field theory}

To compute the generating function $\Fasc_K(X)$, we use the same general strategy as Wright in \cite{wright-counting-abelian-extensions}, combined with a description of the ramification filtration in terms of class field theory. (See also \cite{lagemann-artin-schreier-witt} and \cite{klueners-mueller-conductor-density-local-fields}.) Étale $G$-extensions of any field $K$ are naturally in bijection with continuous group homomorphisms (in the following just called \emph{maps}) $\varphi: \Gal(K^\ab|K) \ra G$.

From now on, let $K=\F_q(T)$ be a global rational function field of characteristic $p$.

Class field theory provides a description of the group $\Gal(K^\ab|K)$.
For any place $P\in M_K$, we have an Artin reciprocity isomorphism
\[
\theta_P : \widehat{K_P^\times} \stackrel\sim\longrightarrow \Gal(K_P^\ab|K_P^{}).
\]
Choosing a uniformizer $\pi\in K_P^\times$, we obtain an isomorphism $\O_P^\times\times\Z\ra K_P^\times$, $(u,n)\mapsto u\pi^n$, which induces an isomorphism
\[
\tau_P : \O_P^\times\times\widehat\Z \stackrel\sim\longrightarrow \widehat{K_P^\times}.
\]
The image of the unit group $\O_P^\times$ under the composition $\theta_P\circ\tau_P$ is the inertia group $I(K_P^\ab|K_P^{})$.
The image of the $k$-th higher unit group $U_P^k$ is the $k$-th higher ramification group $I^k(K_P^\ab|K_P^{})$.

Moreover, we have an Artin reciprocity isomorphism
\[
\theta_K : \widehat{\A_K^\times/K^\times} \stackrel\sim\longrightarrow \Gal(K^\ab|K).
\]
Choosing a place $P$ of degree $1$ and a uniformizer $\pi\in K_P^\times$ of this place, we obtain a map $\Z\ra\A_K^\times$, $1\mapsto(\dots,1,\pi,1,\dots)$. Combining this map with the embedding $\prod_P\O_P^\times\hookrightarrow\A_K^\times$ gives rise to an isomorphism
\[
\tau_K : \bigg[\Big(\prod_{P\in M_K}\O_P^\times\Big) / \F_q^\times\bigg] \times \widehat\Z \stackrel\sim\longrightarrow \widehat{\A_K^\times/K^\times}
\]
(The surjectivity of this map relies on the fact that the rational function field $\F_q(T)$ has trivial class group.)

For any place $P$, we obtain the following commutative diagram:
\begin{equation}
\label{eq:artin-reciprocity-compatibility}
\begin{tikzcd}
\O_P^\times \dar[hook] \rar[hook]{\tau_P} & \widehat{K_P^\times} \dar[hook] \rar{\theta_P} & \Gal(K_P^\ab|K_P^{}) \dar[hook] \\
\Big(\prod_{P\in M_K}\O_P^\times\Big) / \F_q^\times \rar[hook]{\tau_K} & \widehat{\A_K^\times/K^\times} \rar{\theta_K} & \Gal(K^\ab|K)
\end{tikzcd}
\end{equation}
The problem of computing the desired generating functions now simplifies as follows:

\begin{lemma}[Local generating functions]
\label{lem:local-generating-functions}
Let $P$ be any place of $K=\F_q(T)$.
\begin{enumalpha}
\item\label{item:local-generating-functions-extension}
Every map $\varphi:U_P^1\ra G$ has a unique extension to $\O_P^\times$.
\item\label{item:local-generating-functions-asc}
We have $\Fasc_{K_P}(X) = \sum_{\varphi:U_P^1\ra G} X^{\lastjump_P(\varphi)}$, where we let $\lastjump_P(\varphi)$ be the smallest integer $k\geq0$ such that $\varphi(U_P^{k+1})=1$.
\item\label{item:local-generating-functions-cond}
The generating functions for the Artin--Schreier conductor and for the ordinary conductor are related as follows:
\[
\Fcond_{K_P}(X) - 1 = X\cdot\left(\Fasc_{K_P}(X)-1\right).
\]
\end{enumalpha}
\end{lemma}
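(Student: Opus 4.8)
The plan is to deduce all three parts from the class field theory dictionary recorded in the diagram \eqref{eq:artin-reciprocity-compatibility}, together with the structure of the local unit group.

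For part \ref{item:local-generating-functions-extension}, I would use the topological splitting $\O_P^\times \cong \F_{Q_P}^\times\times U_P^1$ coming from Teichmüller lifts, where $\F_{Q_P}^\times$ is cyclic of order $Q_P-1 = q^{\deg(P)}-1$, which is coprime to $p$. Since $G$ is a finite $p$-group, the only continuous homomorphism $\F_{Q_P}^\times\ra G$ is trivial. Hence extending a given $\varphi:U_P^1\ra G$ by the trivial map on $\F_{Q_P}^\times$ produces an extension to $\O_P^\times$, and the same observation forces any extension to be trivial on $\F_{Q_P}^\times$, giving uniqueness.

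For part \ref{item:local-generating-functions-asc}, the étale $G$-extensions of $K_P$ correspond to maps $\psi:\Gal(K_P^\ab|K_P)\ra G$, hence via $\theta_P$ and $\tau_P$ to maps $\O_P^\times\times\widehat\Z\ra G$, that is, to pairs $(\varphi_1,\varphi_2)$ with $\varphi_1:\O_P^\times\ra G$ and $\varphi_2:\widehat\Z\ra G$. There are exactly $|G|$ choices of $\varphi_2$ (the image of the topological generator is arbitrary), and none of them affect the ramification filtration, since the higher ramification groups $I^k(K_P^\ab|K_P)=\theta_P(\tau_P(U_P^k))$ lie in the image $\theta_P(\tau_P(\O_P^\times))$ of inertia. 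Using that upper numbering is compatible with the quotient by $\ker\psi$ (Herbrand), one identifies $I_P^t(L|K)$ with $\psi\big(\theta_P(\tau_P(U_P^{\lceil t\rceil}))\big)$ for $t>0$, so the infimum $\lastjump_P(L)=\inf\{t\geq0\mid I_P^t(L|K)=1\}$ equals the least integer $k\geq0$ with $\varphi_1(U_P^{k+1})=1$; this depends only on $\varphi_1$. Summing and dividing by $|G|$ cancels the $|G|$ choices of $\varphi_2$, and part \ref{item:local-generating-functions-extension} converts the remaining sum over $\varphi_1:\O_P^\times\ra G$ into a sum over $\varphi:U_P^1\ra G$, yielding the claimed formula.

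For part \ref{item:local-generating-functions-cond}, I would run the identical normalization to write $\Fcond_{K_P}(X)=\sum_{\varphi:U_P^1\ra G}X^{\deg\cond_P(\varphi)}$, the conductor again depending only on $\varphi_1$. The unique place of the local field $K_P$ has degree $1$, and $L$ is unramified exactly when $\varphi=1$ (equivalently $\lastjump_P(\varphi)=0$), in which case both conductors vanish; for $\varphi\neq1$ the definition of the ordinary conductor gives $\deg\cond_P(\varphi)=\lastjump_P(\varphi)+1$. Isolating the $\varphi=1$ term, which contributes $1$ to each series, and factoring $X$ out of the remaining exponents yields $\Fcond_{K_P}(X)-1=X\big(\Fasc_{K_P}(X)-1\big)$. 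I expect the main obstacle to be the careful matching in part \ref{item:local-generating-functions-asc} between the arithmetic upper-numbering filtration on $\Gal(K_P^\ab|K_P)$ and the filtration by higher unit groups $U_P^k$: in particular, getting the rounding convention right so that the infimum defining $\lastjump_P(L)$ lands precisely on the integer $k$ singled out in the statement, and confirming that compatibility of upper numbering with quotients legitimately reduces the filtration attached to $L$ to the restriction of $\varphi$ to the $U_P^k$.
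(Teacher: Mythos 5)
Your proof is correct and follows essentially the same route as the paper: the splitting $\O_P^\times\cong\F_{Q_P}^\times\times U_P^1$ with $\gcd(Q_P-1,|G|)=1$ for (a), the $|G|$-to-one correspondence between maps on $\O_P^\times\times\widehat\Z$ and maps on $U_P^1$ combined with the identification of $I^{k+1}(K_P^\ab|K_P^{})$ with the image of $U_P^{k+1}$ for (b), and isolating the trivial $\varphi$ for (c). The only difference is cosmetic: you spell out the Herbrand/quotient-compatibility and the rounding convention that the paper leaves implicit.
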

\begin{proof}
\begin{enumalpha}
\item
This follows from the natural isomorphism $\F_{Q_P}^\times\times U_P^1\stackrel\sim\ra\O_P^\times$ since the order $Q_P-1$ of $\F_{Q_P}^\times$ is coprime to the order of $G$.
\item\label{item:local-generating-functions-asc-proof}
By class field theory, we have a bijection between (étale) $G$-extensions $L$ of $K_P$ and maps $\varphi:\O_P^\times\times\widehat\Z\ra G$. There are exactly $|G|$ maps (continuous homomorphisms) $\widehat\Z\ra G$, so each map $\varphi:U_P^1\ra G$ has exactly $|G|$ extensions to $\O_P^\times\times\widehat\Z$. For $k\geq0$, we have the following equivalences:
\begin{align*}
&\lastjump_P(L) \leq k
\iff I_P^{k+1}(L|K) = 1
\iff \varphi\left((\theta_P\circ\tau_P)^{-1}(I^{k+1}(K_P^\ab|K_P^{}))\right) = 1 \\
&\iff \varphi(U_P^{k+1}) = 1
\iff \lastjump_P(\varphi) \leq k.
\end{align*}
Hence,
\[
\Fasc_{K_P}(X)
= \frac1{|G|}\sum_L X^{\lastjump_P(L)}
= \sum_{\varphi:U_P^1\ra G} X^{\lastjump_P(\varphi)}.
\]
\item
Let $L$ correspond to $\varphi$ as in the proof of \ref{item:local-generating-functions-asc-proof}. The local extension $L|K_P$ is unramified if and only if $I_P(L|K)=1$, which is equivalent to $\varphi(\O_P^\times)=1$. This is equivalent to the restriction $\varphi:U_P^1\ra G$ being the trivial map. Thus, subtracting the trivial map,
\[
\Fcond_{K_P}(X) - 1
= \sum_{\varphi:U_P^1\ra G\textnormal{ non-trivial}} X^{\lastjump_P(\varphi)+1}
\]
and
\[
\Fasc_{K_P}(X) - 1
= \sum_{\varphi:U_P^1\ra G\textnormal{ non-trivial}} X^{\lastjump_P(\varphi)}.
\qedhere
\]
\end{enumalpha}
\end{proof}

\begin{lemma}[Local-global principle]
\label{lem:local-global-principle}
Let $K=\F_q(T)$. For any invariant $\inv\in\{\asc,\cond,\disc\}$, we have
\[
\Finv_K(X) = \prod_{P\in M_K} \Finv_{K_P}\left(X^{\deg(P)}\right).
\]
\end{lemma}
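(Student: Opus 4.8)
The plan is to turn the global sum defining $\Finv_K(X)$ into an Euler product over the places of $K$, using the class field theory decomposition $\tau_K$ together with the compatibility diagram~\eqref{eq:artin-reciprocity-compatibility}. The starting point is that $G$-extensions $L$ of $K$ correspond to maps $\varphi:\Gal(K^\ab|K)\to G$, hence via $\theta_K$ to maps $\widehat{\A_K^\times/K^\times}\to G$, and via the isomorphism $\tau_K$ such a $\varphi$ is the same datum as a pair $(\varphi_0,\varphi_\infty)$, where $\varphi_\infty:\widehat\Z\to G$ and $\varphi_0:\big(\prod_P\O_P^\times\big)/\F_q^\times\to G$. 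Since $G$ is finite and discrete, a continuous homomorphism out of the product $\prod_P\O_P^\times$ has open kernel and is therefore trivial on all but finitely many factors; hence $\varphi_0$ is nothing but a family $(\varphi_P)_P$ of continuous homomorphisms $\varphi_P:\O_P^\times\to G$ with $\varphi_P$ trivial for almost all $P$.

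First I would dispose of the two ``global'' factors. The factor $\varphi_\infty:\widehat\Z\to G$ records only the unramified constant-field extension and contributes nothing to any of the ramification-theoretic divisors $\asc,\cond,\disc$; since there are exactly $|G|$ such maps and the summand $X^{\deg(\inv(L))}$ is independent of $\varphi_\infty$, summing over $\varphi_\infty$ cancels the prefactor $\tfrac1{|G|}$. The quotient by $\F_q^\times$ turns out to impose no constraint at all: under the decomposition $\O_P^\times\cong\F_{Q_P}^\times\times U_P^1$ each $\varphi_P$ is automatically trivial on the roots of unity $\F_{Q_P}^\times$ because $|G|$ is a power of $p$ while $Q_P-1$ is prime to $p$ (this is exactly \Cref{lem:local-generating-functions}\ref{item:local-generating-functions-extension}), and $\F_q^\times$ sits inside every $\F_{Q_P}^\times$. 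Thus $\varphi_0$ ranges precisely over families $(\varphi_P)_P$ of maps $\varphi_P:U_P^1\to G$ that are almost all trivial.

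Next I would use that each invariant divisor is a \emph{sum over places}, so that $\deg(\inv(L))=\sum_P\deg(P)\cdot\iota_P$, where $\iota_P$ is the local exponent at $P$ (the coefficient of $P$ in the divisor: the last jump for $\asc$, and the local conductor or discriminant exponent for $\cond,\disc$). The commutativity of~\eqref{eq:artin-reciprocity-compatibility} identifies the ramification filtration of $L$ at $P$ with that of the local extension attached to $\varphi_P$, so $\iota_P$ depends only on $\varphi_P$ and vanishes when $\varphi_P$ is trivial; for $\disc$ one additionally invokes that unramified places contribute $0$ and that the ramified contribution is determined by the inertia action, i.e.\ by $\varphi_P$. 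Combining these reductions gives
\[
\Finv_K(X)=\sum_{(\varphi_P)_P}\prod_P\big(X^{\deg(P)}\big)^{\iota_P(\varphi_P)},
\]
the outer sum running over families that are almost all trivial.

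Finally, the standard distributive (Euler product) manipulation converts this into $\prod_P\big(\sum_{\varphi_P:U_P^1\to G}(X^{\deg(P)})^{\iota_P(\varphi_P)}\big)=\prod_P\Finv_{K_P}(X^{\deg(P)})$, the last equality being \Cref{lem:local-generating-functions}\ref{item:local-generating-functions-asc} (and its analogues for $\cond,\disc$). I expect the one point genuinely needing care to be the legitimacy of this interchange of an infinite sum and an infinite product: it must be read in $\Z\llbracket X\rrbracket$, where it is justified by noting that every nontrivial $\varphi_P$ forces $\iota_P\geq1$ and hence contributes a monomial of degree $\geq\deg(P)$, and that $K$ has only finitely many places of each degree. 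Consequently only finitely many factors affect any given coefficient of $X^n$, the product converges, and the ``almost all trivial'' condition on families matches exactly the terms picked out when the product is expanded.
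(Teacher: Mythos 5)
Your proof is correct and takes essentially the same route as the paper's: decompose $\Gal(K^\ab|K)$ via $\theta_K$ and $\tau_K$, observe that each almost-everywhere-trivial tuple $(\varphi_P)_P$ gives rise to exactly $|G|$ global maps (cancelling the prefactor $\tfrac1{|G|}$), identify the local exponents through the commutative diagram~\eqref{eq:artin-reciprocity-compatibility}, and expand the Euler product. You spell out two points the paper leaves implicit --- the automatic triviality on $\F_q^\times$ and the formal justification of the sum--product interchange in $\Z\llbracket X\rrbracket$ --- but the argument is the same.
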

\begin{proof}
Each tuple $(\varphi_P)_{P\in M_P}$ of maps $\varphi_P : \O_P^\times \ra G$, almost all of which are trivial, gives rise to exactly $|G|$ maps $\varphi: (\prod_P\O_P^\times) / \F_q^\times \times \widehat\Z \ra G$ since the order of $\F_q^\times$ is coprime to the order of $G$ and since there are exactly $|G|$ maps $\widehat\Z\ra G$. Using the commutative diagram (\ref{eq:artin-reciprocity-compatibility}), we obtain:
\begin{align*}
\Fasc_K(X)
&= \frac{1}{|G|}\sum_L X^{\deg(\asc(L))}
= \frac{1}{|G|}\sum_L \prod_P X^{\lastjump_P(L)\deg(P)} \\
&= \prod_P \sum_{\varphi_P} \left(X^{\deg(P)}\right)^{\lastjump_P(\varphi_P)}
= \prod_P \Fasc_{K_P}\left(X^{\deg(P)}\right).
\end{align*}
For the other two invariants $\inv=\cond$ and $\inv=\disc$, the claim follows in the same way since these invariants can also be computed locally from the images of the higher ramification groups under the maps $\varphi_P$.
\end{proof}

\begin{remark}[{cf.\ \cite[page 28f]{wright-counting-abelian-extensions}}]
\label{rmk:only-fields}
The (étale) $G$-extension $L$ corresponding to a map $\varphi:\Gal(K^\ab|K)\ra G$ is a field if and only if $\varphi$ is surjective. Using inclusion--exclusion over the image of $\varphi$, one can therefore write the generating function $F^{\inv}_{K,\textnormal{field}}(X)$ counting only field $G$-extensions of $K$ as a linear combination of the generating functions counting (étale) $G'$-extensions of $K$ for all subgroups $G'\subseteq G$. In particular, if all the generating functions counting (étale) extensions are rational functions, then so is the generating function counting field extensions.
\end{remark}

\section{Artin--Schreier conductors}

Our goal is to compute the local generating functions $\Fasc_{K_P}(X)$ using \Cref{lem:local-generating-functions}\ref{item:local-generating-functions-asc}. To this end, we first describe the structure of $U_P^1$ as an abstract group:

\begin{lemma}[{cf.\ \cite[Proposition II.5.7(ii)]{neukirch-algebraic-number-theory}}]
\label{lem:unit-group-structure}
Let $Q_P = q^{\deg(P)} = p^d$ be the size of the residue field of $\O_P$. Consider the non-decreasing function $\nu:\R_{>0}\ra\Z_{\geq0}$ given by
\[
\nu(x) := \min\{k\in\Z_{\geq0} \mid p^k \geq x\} = |\{j\in\Z_{\geq0} \mid p^j < x\}|.
\]
There is an isomorphism (of topological groups)
\[
\alpha : \prod_{\substack{i\geq1:\\p\nmid i}} \Z_p^d \stackrel\sim\longrightarrow U_P^1
\]
such that the preimage under $\alpha$ of $U_P^k \subseteq U_P^1$ is the subgroup
\[
\Gamma_P^k := \prod_{\substack{i\geq1:\\p\nmid i}} p^{\nu(k/i)}\Z_p^d.
\]
\end{lemma}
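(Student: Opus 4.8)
The plan is to work with the explicit description of $K_P$ as a Laurent series field and to read off both the isomorphism type of $U_P^1$ and its ramification filtration from the characteristic-$p$ identity $(1+x)^{p}=1+x^{p}$. Since $K_P$ is a complete discrete valuation field of characteristic $p$ with finite (hence perfect) residue field $\F_{Q_P}$, it is isomorphic to $\F_{Q_P}(\!(\pi)\!)$ for a uniformizer $\pi$, and $U_P^1 = 1+\pi\F_{Q_P}\llbracket\pi\rrbracket$, with $U_P^k = 1+\pi^k\F_{Q_P}\llbracket\pi\rrbracket$. Fixing an $\F_p$-basis $\omega_1,\dots,\omega_d$ of $\F_{Q_P}$, I would define $\alpha$ on the standard topological generator of the $(i,j)$-th factor $\Z_p$ (indexed by $p\nmid i$ and $1\le j\le d$) by sending it to the principal unit $1+\omega_j\pi^i$, and extend $\Z_p$-linearly (that is, by exponentiation) and multiplicatively, so that $\alpha\big((a_{i,j})\big) = \prod_{i,j}(1+\omega_j\pi^i)^{a_{i,j}}$, the product converging in $U_P^1$.

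The key computation is that in characteristic $p$ one has $(1+\omega_j\pi^i)^{p^s} = 1+\omega_j^{p^s}\pi^{ip^s}$ exactly, so for $a=p^s u$ with $u\in\Z_p^\times$ one gets $(1+\omega_j\pi^i)^{a} = 1+\bar u\,\omega_j^{p^s}\pi^{ip^s} + (\text{higher order})$, an element of $U_P^{ip^s}\setminus U_P^{ip^s+1}$, where $\bar u\in\F_p^\times$ is the reduction of $u$ (only $\bar u$ survives because $\F_{Q_P}$ has characteristic $p$). Thus the $(i,j)$-th factor with coordinate $a$ lies in $U_P^k$ precisely when $ip^{v_p(a)}\ge k$, i.e.\ when $v_p(a)\ge\nu(k/i)$, i.e.\ when $a\in p^{\nu(k/i)}\Z_p$; this already gives $\alpha(\Gamma_P^k)\subseteq U_P^k$, and it remains to prove the reverse. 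For this I would pass to the associated graded and show there is no cancellation between distinct factors of the same level: a level $m$ is written uniquely as $m=i'p^{s}$ with $p\nmid i'$, and the only factors contributing to $U_P^m/U_P^{m+1}\cong\F_{Q_P}$ are those with $i=i'$ and $v_p(a_{i',j})=s$, with total contribution $\sum_j \bar u_{i',j}\,\omega_j^{p^s}$. Since $x\mapsto x^{p^s}$ is an $\F_p$-linear automorphism of $\F_{Q_P}$, the elements $\omega_1^{p^s},\dots,\omega_d^{p^s}$ again form an $\F_p$-basis, so this sum vanishes only when all $\bar u_{i',j}=0$.

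This no-cancellation statement does double duty: applied level by level it shows simultaneously that $\alpha$ is injective (a nontrivial tuple has nonzero image in its lowest occupied graded piece), that $\alpha$ is surjective (given $u\in U_P^1$ one solves for the coordinates one graded piece at a time and passes to the limit, using completeness of $U_P^1$), and that $\alpha^{-1}(U_P^k)=\Gamma_P^k$ exactly rather than merely $\supseteq$. Continuity and openness of $\alpha$ then follow because $\alpha$ carries the neighborhood basis $\Gamma_P^k = \prod p^{\nu(k/i)}\Z_p^d$ of the source onto the neighborhood basis $U_P^k$ of the target. Alternatively, the bare fact that $\alpha$ is an isomorphism of topological groups is \cite[Proposition~II.5.7(ii)]{neukirch-algebraic-number-theory}, and only the filtration statement $\alpha^{-1}(U_P^k)=\Gamma_P^k$ would then require the computation above.

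I expect the main obstacle to be the bookkeeping in the associated-graded argument: making precise that, level by level, the only interacting factors are those sharing the same $i'$ and the same $p$-adic valuation, and that these cannot cancel because Frobenius permutes $\F_p$-bases. Everything else — the characteristic-$p$ power identity, the convergence of the defining product, and the translation between $ip^{v_p(a)}\ge k$ and $a\in p^{\nu(k/i)}\Z_p$ via the definition of $\nu$ — is routine once this non-degeneracy on graded pieces is in place.
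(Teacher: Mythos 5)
Your proposal is correct and follows essentially the same route as the paper: the paper defines the very same map $\alpha(a)=\prod_{i,j}(1+\omega_j t^i)^{a_{ij}}$ and reduces everything to the statement that $\alpha(a)-1$ has valuation $\min\{ip^{s_i}\}$, citing Neukirch for the verification, which is exactly the characteristic-$p$ power identity plus the no-cancellation-on-graded-pieces argument you spell out. The only difference is that you supply the details the paper delegates to the reference.
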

\begin{proof}
Let $t$ be a uniformizer of $K_P$, so that $K_P=\F_{Q_P}((t))$, and let $\omega_1,\dots,\omega_d\in\F_{Q_P}\subset K_P$ form an $\F_p$-basis of $\F_{Q_P}$. For any $a = (a_i)_i \in \Gamma_P^1 = \prod_{i\geq1:\ p\nmid i}\Z_p^d$ with $a_i=(a_{i1},\dots,a_{id})\in\Z_p^d$, we let
\[
\alpha(a)
:= \prod_{\substack{i\geq1:\\ p\nmid i}}
  \prod_{j=1}^d
    \left(1+\omega_jt^i\right)^{a_{ij}}.
\]
One can show that $\alpha$ is an isomorphism of topological groups and that $\alpha(a)-1$ has valuation $\min\{ip^{s_i}:i\geq1,\ p\nmid i,\ a_i\neq0\}$ if we write $a_i = p^{s_i} b_i$ with $b_i \in \Z_p^d \setminus p\Z_p^d$. (See the proof of \cite[Proposition~II.5.7(ii)]{neukirch-algebraic-number-theory}.) Hence, $\alpha(a)\in U_P^k$ if and only if $a\in\Gamma_P^k$.
\end{proof}

\begin{theorem}
\label{thm:main}
Let $G=\prod_{e\geq1} C_{p^e}^{m_e}$ and $c_0,c_1,\dots$ as in \Cref{thm:main-intro}, let $K=\F_q(T)$ be a rational function field of characteristic~$p$, and let $Z_K(X)$ be the Hasse--Weil zeta function of~$\vP^1_{\F_q}$.
\begin{enumalpha}
\item\label{item:main-local-count}
For any place $P$ of $K$ and any $k\geq0$, there are exactly $Q_P^{\tau(k)}$ maps $\varphi:U_P^1\ra G$ satisfying $\lastjump_P(\varphi)\leq k$, where
\[
\tau(k)
= \sum_{e\geq1}m_e\left(k - \left\lfloor\frac{k}{p^e}\right\rfloor\right).
\]
\item\label{item:main-local-generating-function}
The local generating function for a place $P$ is
\[
\Fasc_{K_P}(X)
= \prod_{i=0}^\infty \frac{1-(Q_P^{c_i}X)^{p^i}}{1-(Q_P^{c_{i+1}}X)^{p^i}}.
\]
\item\label{item:main-global-generating-function}
The global generating function is
\[
\Fasc_K(X)
= \prod_{i=0}^\infty\frac{Z_K\big((q^{c_{i+1}}X)^{p^i}\big)}{Z_K\big((q^{c_i}X)^{p^i}\big)}.
\]
\end{enumalpha}
\end{theorem}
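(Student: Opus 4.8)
The plan is to prove the three parts in order, deriving (b) from (a) and (c) from (b), and using \Cref{lem:unit-group-structure}, \Cref{lem:local-generating-functions}\ref{item:local-generating-functions-asc}, and \Cref{lem:local-global-principle} as given.

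For part (a) I would first turn the condition $\lastjump_P(\varphi)\le k$ into a homomorphism count. By definition it means $\varphi(U_P^{k+1})=1$, so the maps in question are exactly those factoring through $U_P^1/U_P^{k+1}$. Transporting through the isomorphism $\alpha$ of \Cref{lem:unit-group-structure} identifies this quotient with
\[
\Gamma_P^1/\Gamma_P^{k+1} \cong \prod_{\substack{i\ge1\\ p\nmid i}}\big(\Z/p^{\nu((k+1)/i)}\Z\big)^d,
\qquad Q_P=p^d .
\]
The number of maps into $G=\prod_e C_{p^e}^{m_e}$ then factors as a product of local counts $|\operatorname{Hom}(\Z/p^{n}\Z,C_{p^e})|=p^{\min(n,e)}$, producing the exponent $d\sum_{p\nmid i}\sum_e m_e\min(\nu((k+1)/i),e)$. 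The remaining work is purely combinatorial: writing $\nu((k+1)/i)=|\{j\ge0:ip^j\le k\}|$, hence $\min(\nu((k+1)/i),e)=|\{0\le j<e:ip^j\le k\}|$, and using the unique factorization $m=ip^j$ with $p\nmid i$ of each $1\le m\le k$, one sees that $\sum_{p\nmid i}\min(\nu((k+1)/i),e)$ counts the $m\le k$ with $p^e\nmid m$, namely $k-\lfloor k/p^e\rfloor$. Weighting by $m_e$ and summing over $e$ gives $\tau(k)$, so $Q_P=p^d$ yields the count $Q_P^{\tau(k)}$.

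For part (b) I would begin from \Cref{lem:local-generating-functions}\ref{item:local-generating-functions-asc} and apply partial summation: with $A(k)=Q_P^{\tau(k)}$ the count from (a),
\[
\Fasc_{K_P}(X)=\sum_{k\ge0}\big(A(k)-A(k-1)\big)X^k=(1-X)\,S(X),
\qquad S(X):=\sum_{k\ge0}Q_P^{\tau(k)}X^k .
\]
Since the $i=0$ factor of the target product is $\tfrac{1-X}{1-Q_P^{c_1}X}$, it suffices to show $S(X)=\tfrac1{1-Q_P^{c_1}X}\prod_{i\ge1}\tfrac{1-(Q_P^{c_i}X)^{p^i}}{1-(Q_P^{c_{i+1}}X)^{p^i}}$. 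The engine is a self-similarity of $\tau$ under the base-$p$ expansion: writing $k=pk'+r$ with $0\le r<p$ and using $\lfloor(pk'+r)/p^e\rfloor=\lfloor k'/p^{e-1}\rfloor$, one obtains
\[
\tau_G(pk'+r)=c_1\big((p-1)k'+r\big)+\tau_{G'}(k') ,
\]
where $G'=\prod_e C_{p^e}^{m_{e+1}}$ is $G$ with its multiplicity sequence shifted down by one. Splitting the sum defining $S=S_G$ according to the residue $r$ then gives
\[
S_G(X)=\frac{1-(Q_P^{c_1}X)^p}{1-Q_P^{c_1}X}\;S_{G'}\big(Q_P^{c_1(p-1)}X^p\big) ,
\]
and I would close by induction on the length of $G$, the trivial group giving $S=\tfrac1{1-X}$ as base case.

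The arithmetic that makes this induction succeed is also the step I expect to be the main obstacle, namely relating the constants of $G'$ to those of $G$. One computes $c_0^{(G')}=0$ and $c_i^{(G')}=p\,c_{i+1}-(p-1)c_1$ for $i\ge1$, so that $c_i^{(G')}+(p-1)c_1=p\,c_{i+1}$; substituting $Y=Q_P^{c_1(p-1)}X^p$ into the inductive formula for $S_{G'}(Y)$ then performs the shift
\[
\frac{1-(Q_P^{c_i^{(G')}}Y)^{p^i}}{1-(Q_P^{c_{i+1}^{(G')}}Y)^{p^i}}
\;\longmapsto\;
\frac{1-(Q_P^{c_{i+1}}X)^{p^{i+1}}}{1-(Q_P^{c_{i+2}}X)^{p^{i+1}}},
\qquad Y=Q_P^{c_1(p-1)}X^p ,
\]
turning the $i$-th factor for $G'$ into the $(i+1)$-st factor for $G$, while the prefactor $\tfrac{1-(Q_P^{c_1}X)^p}{1-Q_P^{c_1}X}$ supplies exactly the missing $i=1$ factor and the leading $\tfrac1{1-Q_P^{c_1}X}$. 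Getting this reindexing and all the exponents $p^i$ versus $p^{i+1}$ to align precisely is where the care is required. Finally, for part (c) I would combine (b) with the local–global principle, $\Fasc_K(X)=\prod_P\Fasc_{K_P}(X^{\deg P})$; since $Q_P=q^{\deg P}$, each local factor reads
\[
\Fasc_{K_P}\big(X^{\deg P}\big)=\prod_{i\ge0}\frac{1-\big((q^{c_i}X)^{p^i}\big)^{\deg P}}{1-\big((q^{c_{i+1}}X)^{p^i}\big)^{\deg P}} .
\]
Because $c_{i+1}=c_i$ for all large $i$, only finitely many $i$ contribute, so I may interchange $\prod_P$ with $\prod_i$ and recognize the Euler product $\prod_P\tfrac1{1-T^{\deg P}}=Z_K(T)$ of the Hasse–Weil zeta function; this converts $\prod_P$ of each factor into $Z_K((q^{c_{i+1}}X)^{p^i})/Z_K((q^{c_i}X)^{p^i})$, giving the stated formula. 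All these manipulations are valid as identities of formal power series (equivalently for $|X|$ small), so no convergence issue arises beyond the standard convergence of the Euler product.
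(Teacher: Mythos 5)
Your proposal is correct. Parts (a) and (c) coincide with the paper's argument essentially step for step: the same reduction via $\alpha$ to counting maps out of $\Gamma_P^1/\Gamma_P^{k+1}$, the same combinatorial reindexing through $m=ip^j$, and the same interchange of the Euler product with the finite product over $i$. Part (b), however, is organized genuinely differently. The paper exploits the base-$p$ self-similarity of $\tau$ all at once: it proves $\tau\bigl(\sum_i v_ip^i\bigr)=\sum_i v_ip^ic_{i+1}$ and thereby factors $\sum_k Q_P^{\tau(k)}X^k$ directly into a product over digit positions of finite geometric sums $\sum_{v=0}^{p-1}(Q_P^{c_{i+1}}X)^{vp^i}$, which telescope to the stated product. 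You instead peel off one digit at a time, deriving the functional equation $S_G(X)=\frac{1-(Q_P^{c_1}X)^p}{1-Q_P^{c_1}X}\,S_{G'}\bigl(Q_P^{c_1(p-1)}X^p\bigr)$ for the shifted group $G'$ and inducting on the exponent of $G$; I checked the identity $\tau_G(pk'+r)=c_1((p-1)k'+r)+\tau_{G'}(k')$ and the relation $c_i^{(G')}+(p-1)c_1=pc_{i+1}$, and both are right, so the reindexing closes the induction exactly as you describe. The two arguments rest on the same underlying fact (the digit-additivity of $\tau$); the paper's version is shorter because it avoids introducing $G'$ and tracking its constants, while yours makes the recursive structure of the answer explicit and isolates the shift $m_e\mapsto m_{e+1}$ as the reason the product telescopes --- a trade-off, not a gap.
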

Note that $m_i=0$ and hence $c_{i+1}=c_i$ for all sufficiently large $i$. Thus, all but finitely many factors in \ref{item:main-local-generating-function} and \ref{item:main-global-generating-function} are $1$.
\begin{proof}
\begin{enumalpha}
\item
Recalling the definition of $\lastjump_P(\varphi)$ in \Cref{lem:local-generating-functions}\ref{item:local-generating-functions-asc} and using the isomorphism $\alpha:\Gamma_P^1\ra U_P^1$ from \Cref{lem:unit-group-structure}, we see that the maps $\varphi:U_P^1\ra G$ that satisfy $\lastjump_P(\varphi)\leq k$ correspond to the maps $\varphi:\Gamma_P^1\ra G$ that satisfy $\varphi(\Gamma_P^{k+1})=1$, i.e., to the maps $\varphi: \Gamma_P^1/\Gamma_P^{k+1}\ra G$. By definition,
\[
\Gamma_P^1/\Gamma_P^{k+1}
\cong \prod_{\substack{i\geq1:\\p\nmid i}} \left(\Z/p^{\nu((k+1)/i)}\Z\right)^d.
\]
Hence,
\begin{align*}
|\{\varphi:U_P^1\ra G : \lastjump_P(\varphi)\leq k\}|
&= \prod_{\substack{i\geq1:\\p\nmid i}} |\Hom(\Z/p^{\nu((k+1)/i)}\Z, G)|^d \\
&= \prod_{\substack{i\geq1:\\p\nmid i}} \left|G\left[p^{\nu((k+1)/i)}\right]\right|^d.
\end{align*}
For any $r\geq0$, we have $|G[p^r]| = p^{\sum_{e\geq1}m_e\cdot\min(e,r)}$. Therefore, the number of $\varphi$ with $\lastjump_P(\varphi)\leq k$ is $p^{\tau(k)d}=Q_P^{\tau(k)}$ for
\begin{align*}
\tau(k)
&:= \sum_{\substack{i\geq1:\\p\nmid i}}\sum_{e\geq1}m_e\cdot\min\left(e,\nu\left(\frac{k+1}{i}\right)\right) \\
&= \sum_{e\geq1}m_e\cdot|\{(i,j) : i \geq 1,\ p\nmid i,\ 0\leq j<e,\ ip^j \leq k\}| \\
&\stackrel{\mathclap{(x:=ip^j)}}=\quad \sum_{e\geq1}m_e\cdot|\{1\leq x\leq k : p^e \nmid x\}| %\\
= \sum_{e\geq1}m_e\left(k - \left\lfloor\frac{k}{p^e}\right\rfloor\right).
\end{align*}
\item
With \Cref{lem:local-generating-functions}\ref{item:local-generating-functions-asc}, we obtain
\begin{align*}
&\Fasc_{K_P}(X)
=\sum_{\varphi:U_P^1\ra G} X^{\lastjump_P(\varphi)}
= \sum_{k\geq0} |\{\varphi : \lastjump_P(\varphi)=k\}| \cdot X^k \\
&= (1-X)\sum_{k\geq0} |\{\varphi : \lastjump_P(\varphi)\leq k\}| \cdot X^k %\\
= (1-X)\sum_{k\geq0}Q_P^{\tau(k)}X^k.
\end{align*}
Any integer $k\geq0$ can be uniquely written in base $p$ as $k=\sum_{i\geq 0} v_ip^i$ with integers $v_0,v_1,\dots\in\{0,\dots,p-1\}$, all but finitely many of which are $0$. We have
\begin{align*}
\tau\left(\sum_{i\geq0} v_ip^i\right)
&= \sum_{e\geq1} m_e\left(\sum_{i\geq0} v_ip^i - \sum_{i\geq e} v_ip^{i-e}\right) \\
&= \sum_{i\geq0} v_i p^i \cdot \left( \sum_{e=1}^\infty m_e - \sum_{e=1}^i m_e p^{-e} \right)
= \sum_{i\geq0} v_i p^i c_{i+1}.
\end{align*}
Thus,
\begin{align*}
\sum_{k\geq0} Q_P^{\tau(k)} X^k
&= \sum_{v_0,v_1,\dots} \prod_{i\geq0} Q_P^{v_ip^ic_{i+1}} X^{v_i p^i}
= \prod_{i\geq0} \sum_{v_i=0}^{p-1} Q_P^{v_i p^i c_{i+1}} X^{v_i p^i} \\
&= \prod_{i\geq0} \frac{1 - (Q_P^{c_{i+1}} X)^{p^{i+1}}}{1 - (Q_P^{c_{i+1}} X)^{p^i}}
= \frac{\prod_{i\geq1}\big(1 - (Q_P^{c_i} X)^{p^i}\big)}{\prod_{i\geq0}\big(1 - (Q_P^{c_{i+1}} X)^{p^i}\big)}.
\end{align*}
Since $c_0=0$, we obtain
\[
\Fasc_{K_P}(X)
= (1-X) \sum_{k\geq0} Q_P^{\tau(k)} X^k
= \frac{\prod_{i\geq0}\big(1 - (Q_P^{c_i}X)^{p^i}\big)}{\prod_{i\geq0}\big(1 - (Q_P^{c_{i+1}}X)^{p^i}\big)}.
\]
\item
The Hasse--Weil zeta function
\[
Z_K(X)
= \sum_{D\textnormal{ divisor on }\vP^1_{\F_q}} X^{\deg(D)}
= \frac{1}{(1-X)(1-qX)}
\]
of $\vP^1_{\F_q}$ can be written as an Euler product
\[
Z_K(X)
= \prod_{P\in M_K} \frac{1}{1-X^{\deg(P)}}.
\]
Combining \Cref{lem:local-global-principle} and part \ref{item:main-local-generating-function} with $Q_P = q^{\deg(P)}$, we obtain
\[
\Fasc_K(X)
= \prod_{P\in M_K} \prod_{i=0}^\infty \frac{1-(q^{c_i}X)^{p^i\deg(P)}}{1-(q^{c_{i+1}}X)^{p^i\deg(P)}}
= \prod_{i=0}^\infty\frac{Z_K\big((q^{c_{i+1}}X)^{p^i}\big)}{Z_K\big((q^{c_i}X)^{p^i}\big)}
\]
as claimed.
\qedhere
\end{enumalpha}
\end{proof}

\begin{example}
\label{ex:local-power-of-cyclic}
For the group $G=C_{p^e}^r = C_{p^e}\times\cdots\times C_{p^e}$, we have
\[
c_0=0,\qquad
c_1=\cdots=c_e=r,\qquad
c_{e+1}=c_{e+2}=\cdots=(1-p^{-e})r,
\]
so the local generating function for a place $P$ is
\[
\Fasc_{K_P}(X)
= \frac{
  \big(1-X\big)
  \big(1-Q_P^{p^er}X^{p^e}\big)
}{
  \big(1-Q_P^rX\big)
  \big(1-Q_P^{(p^e-1)r}X^{p^e}\big)
}
\]
and the global generating function is
\begin{align*}
\Fasc_K(X)
&= \frac{
  Z_K\big(q^rX\big)
  Z_K\big(q^{(p^e-1)r}X^{p^e}\big)
}{
  Z_K\big(X\big)
  Z_K\big(q^{p^er}X^{p^e}\big)
} \\
&= \frac{
  \big(1-X\big)
  \big(1-qX\big)
  \big(1-q^{p^er}X^{p^e}\big)
  \big(1-q^{p^er+1}X^{p^e}\big)
}{
  \big(1-q^rX\big)
  \big(1-q^{r+1}X\big)
  \big(1-q^{(p^e-1)r}X^{p^e}\big)
  \big(1-q^{(p^e-1)r+1}X^{p^e}\big)
}.
\end{align*}
For $r=1$, this simplifies to
\[
\Fasc_K(X)
= \frac{
  \big(1-X\big)
  \big(1-q^{p^e+1}X^{p^e}\big)
}{
  \big(1-q^2X\big)
  \big(1-q^{p^e-1}X^{p^e}\big)
}.
\]
\end{example}

\begin{remark}
From the partial fraction decomposition of the generating function $\Fasc_K(X)$, one can extract an exact formula for the number of (étale) $G$-extensions $L$ of $K$ with $\deg(\asc(L))=n$. (See for example the proof of \cite[Theorem~IV.9]{flajolet-sedgewick-analytic-combinatorics}.)
\end{remark}

\begin{corollary}
\label{cor:poles-and-asymptotics}
Assume $G$ is non-trivial. Then:
\begin{enumalpha}
\item
\label{item:poles-and-asymptotics-local}
The poles of the local generating function $\Fasc_{K_P}(X)$ are simple poles at the $p^t$ points $X\in\C$ with $(Q_P^a X)^{p^t}=1$, where $p^t$ is the exponent of $G$ and $a := \sum_{e=1}^\infty m_e(1-p^{-e}) = \sum_{j=1}^\infty (p-1)p^{-j}\dim_{\F_p}(G[p^j]/G[p^{j-1}])$.

By \Cref{lem:local-generating-functions}\ref{item:local-generating-functions-cond}, the local generating functions $\Fasc_{K_P}(X)$ and $\Fcond_{K_P}(X)$ have the same poles. The local asymptotics for the number of extensions of $K_P$ with a given degree of $\asc(L)$ or of $\cond(L)$ are of course essentially the same as their degrees are only off by $1$ (in the ramified case), so we recover the result of Klüners and Müller \cite[Theorem~1]{klueners-mueller-conductor-density-local-fields}.
\item
\label{item:poles-and-asymptotics-global}
The only innermost pole of the global generating function $\Fasc_K(X)$ is a simple pole at $X=q^{-a'}$, where $a' := 1+\dim_{\F_p}(G[p])$.
In particular, we have the following asymptotic statement for some constant $C>0$:
\[
|\{L\textnormal{ (étale) $G$-extension of }K : \deg(\asc(L)) = n\}|
\sim C q^{a' n}
\qquad\textnormal{ for }n\rightarrow\infty.
\]
\end{enumalpha}
\end{corollary}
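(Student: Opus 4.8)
The plan is to read off both claims from the explicit products in \Cref{thm:main}\ref{item:main-local-generating-function} and \ref{item:main-global-generating-function}, after simplifying each expression so that only the factors responsible for the genuine poles remain.

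For part \ref{item:poles-and-asymptotics-local}, I would first record that $c_i-a=\sum_{e\ge i}m_ep^{-e}$, so the sequence $c_1\ge c_2\ge\cdots$ is non-increasing with $c_i=a$ precisely for $i\ge t+1$, where $p^t$ is the exponent of $G$. Hence the numerator and denominator factors of index $i$ in \ref{item:main-local-generating-function} agree (and cancel) for all $i\ge t+1$, leaving a finite product; passing to this finite truncation before manipulating factors individually is essential, since the separate infinite subproducts need not converge. Applying $1-Y^{p^i}=(1-Y^{p^{i-1}})\sum_{\ell=0}^{p-1}Y^{\ell p^{i-1}}$ with $Y=Q_P^{c_i}X$ to each numerator factor of index $1\le i\le t$, the factor $1-Y^{p^{i-1}}$ is exactly the denominator factor of index $i-1$, so the product telescopes to
\[
\Fasc_{K_P}(X)
=\frac{(1-X)\,\prod_{i=1}^{t}\Psi_i(X)}{1-(Q_P^{a}X)^{p^{t}}},
\qquad
\Psi_i(X):=\sum_{\ell=0}^{p-1}\bigl(Q_P^{c_i}X\bigr)^{\ell p^{i-1}}.
\]
Since the numerator is a polynomial, the only possible poles are the $p^t$ (distinct) roots of the denominator, each simple; it then remains to check that the numerator does not vanish there. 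At such a root $X_0$ one has $|X_0|=Q_P^{-a}$, so $1-X_0\ne0$ because $a>0$, and since $c_i>a$ for $1\le i\le t$ the quantity $z:=(Q_P^{c_i}X_0)^{p^{i-1}}$ satisfies $|z|>1$, hence is not a nontrivial $p$-th root of unity and $\Psi_i(X_0)=(1-z^p)/(1-z)\ne0$. This yields exactly the stated $p^t$ simple poles.

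For part \ref{item:poles-and-asymptotics-global}, I would substitute $Z_K(Y)=1/\bigl((1-Y)(1-qY)\bigr)$ into \ref{item:main-global-generating-function}, obtaining
\[
\Fasc_K(X)
=\prod_{i\ge0}
\frac{\bigl(1-(q^{c_i}X)^{p^i}\bigr)\bigl(1-q\,(q^{c_i}X)^{p^i}\bigr)}
     {\bigl(1-(q^{c_{i+1}}X)^{p^i}\bigr)\bigl(1-q\,(q^{c_{i+1}}X)^{p^i}\bigr)}.
\]
Each factor has its roots on a single circle $|X|=q^{-\sigma}$, so I would tabulate these exponents $\sigma$: the denominator factors (sources of poles) give $\sigma\in\{c_{i+1},\,c_{i+1}+p^{-i}\}$, while the numerator factors (sources of zeros) give $\sigma\in\{c_i,\,c_i+p^{-i}\}$. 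Using $c_i\le c_1$ for all $i$ and $p^{-i}\le1$ with equality only at $i=0$, the largest pole exponent is $c_1+1=a'$, attained uniquely by the linear factor $1-q^{c_1+1}X$ at $i=0$ (vanishing only at $X=q^{-a'}$), whereas every zero exponent is at most $c_1+p^{-1}<a'$. Thus $X=q^{-a'}$ is the unique pole of smallest modulus, it is not cancelled by any zero, and it is simple. The asymptotic then follows from a standard transfer theorem (partial fractions; cf.\ \cite[Theorem~IV.9]{flajolet-sedgewick-analytic-combinatorics}): a rational series with a unique dominant singularity that is a simple pole at $X=q^{-a'}$ has coefficients asymptotic to $Cq^{a'n}$, and $C>0$ because the coefficients are nonnegative and $q^{-a'}$ is the radius of convergence.

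The algebraic simplifications are routine. I expect the main obstacle to be purely the bookkeeping that pins down the exact pole set: in \ref{item:poles-and-asymptotics-local} one must truncate to a finite product before telescoping and then verify the numerator non-vanishing at the poles, and in \ref{item:poles-and-asymptotics-global} one must carefully compare the finitely many exponents $\sigma$ to certify both that $a'$ is strictly the largest pole exponent and that it strictly exceeds all zero exponents, guaranteeing that the dominant pole is genuine and simple.
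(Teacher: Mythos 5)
Your proof is correct and follows essentially the same route as the paper: you telescope the local product into a polynomial numerator over the single denominator $1-(Q_P^{a}X)^{p^t}$ and check non-vanishing of the numerator at its roots, and for the global statement you compare the moduli $q^{-\sigma}$ of all zeros and poles of the zeta-factors to isolate the simple dominant pole at $X=q^{-a'}$. The only difference is that you spell out the non-vanishing check and the exponent comparison in more detail than the paper does.
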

\begin{proof}
We have $m_t\neq0$ but $m_{t+1}=m_{t+2}=\cdots=0$. Thus,
\[
c_1 \geq \cdots \geq c_t > c_{t+1} = c_{t+2} = \dots = a.
\]
\begin{enumalpha}
\item
We can write
\[
\Fasc_{K_P}(X)
= (1-X)
\cdot
\prod_{i=0}^{t-1} \frac{1 - (Q_P^{c_{i+1}} X)^{p^{i+1}}}{1 - (Q_P^{c_{i+1}} X)^{p^i}}
\cdot
\frac{1}{1 - (Q_P^{c_{t+1}} X)^{p^t}}.
\]
All factors except the last one are polynomials (geometric sums) and their zeros have absolute value different from $Q_P^{-a}$. The poles of the last factor are the points $X\in\C$ with $(Q_P^a X)^{p^t}=1$, which have absolute value $Q_P^{-a}$.
\item
Recall the expression% \todo{unnecessary to recall if on same page}
\[
\Fasc_K(X)
= \prod_{i=0}^\infty\frac{Z_K\big((q^{c_{i+1}}X)^{p^i}\big)}{Z_K\big((q^{c_i}X)^{p^i}\big)}.
\]
The pole at $q^{-a'} = q^{-1-c_1}$ comes from the factor $Z_K(q^{c_1}X)$ in the numerator. All poles and zeros of the other factors have larger absolute value.

The asymptotic statement on the coefficients of the power series $\Fasc_K(X)$ follows by subtracting the appropriate geometric series from $\Fasc_K(X)$ to remove the innermost pole. (See for example \cite[Theorem~IV.9]{flajolet-sedgewick-analytic-combinatorics}.)
\qedhere
\end{enumalpha}
\end{proof}

\begin{remark}
Let $G$ be a cyclic $p$-group, say $G=C_{p^e}$.
Instead of considering just the last jump in the ramification filtration, we could take all jumps into account: let
\[
\jump_{P,i}(\varphi) := \inf\{t\in\R_{\geq0} : \varphi(I^t(K_P^\ab|K_P^{})) \subseteq G[p^i]\}.
\]
By definition, $\jump_{P,i}(\varphi)=0$ for all $i\geq e$. Using the Hasse--Arf theorem and class field theory, one can show that the numbers $\jump_{P,i}(\varphi)$ are non-negative integers and that $\jump_{P,i}(\varphi) \geq p\cdot\jump_{P,i+1}(\varphi)$ for all $i$.
Using similar methods as above, one can then compute the multivariate generating function
\begin{align*}
F^{\jump}_{K_P}(X_0,\dots,X_{e-1})
&:= \sum_{\varphi: U_P^1 \ra G}
  \prod_{i=0}^{e-1} X_i^{\jump_{P,i}(\varphi)-p\cdot\jump_{P,i+1}(\varphi)} \\
&= \prod_{i=0}^{e-1}
  \frac{
    \big(1-Q_P^{p^i-1}X_i\big)
    \big(1-Q_P^{p^{i+1}}X_i^p\big)
  }{
    \big(1-Q_P^{p^i}X_i\big)
    \big(1-Q_P^{p^{i+1}-1}X_i^p\big)
  }.
\end{align*}
(Plugging in $(X_0,X_1,\dots,X_{e-1})=(X,X^p,\dots,X^{p^{e-1}})$, we recover the generating function $\Fasc_{K_P}(X)$ for the last jump $\lastjump_P(\varphi)=\jump_{P,0}(\varphi)$.)
The corresponding global multivariate generating function $F^{\jump}_K(X_0,\dots,X_{e-1}) = \prod_{P\in M_K} F^{\jump}_{K_P}(X_0^{\deg(P)},\dots,X_{e-1}^{\deg(P)})$ is again a finite product of values of the Hasse--Weil zeta function, and in particular rational.
\end{remark}

\section{Ordinary conductors and discriminants}
\label{sn:other-invariants}

While the local generating functions $\Finv_{K_P}(X)$ are rational functions for all invariants considered above (see \Cref{thm:main}\ref{item:main-local-generating-function} and \Cref{lem:local-generating-functions}\ref{item:local-generating-functions-cond} for $\inv=\asc$ and $\inv=\cond$; see \cite{potthast-elementary-abelian} for $\inv=\disc$ when $G$ is elementary abelian; a similar argument works for arbitrary abelian $p$-groups~$G$), this is in general not true for the global generating functions.

In this section, we show for the example $G=C_3$ that the global generating functions $\Fcond_K(X)$ and $\Fdisc_K(X)$ for ordinary conductors and discriminants (with $K=\F_q(T)$) cannot be continued meromorphically to the entire complex plane. (In particular, they are not rational functions.)

For $G=C_3$, combining \Cref{lem:local-generating-functions}\ref{item:local-generating-functions-cond} and \Cref{ex:local-power-of-cyclic}, we obtain the following local generating function for any place $P$:
\begin{equation}
\label{eq:C3-cond-local}
\Fcond_{K_P}(X)
= \frac{
  \big(1-X\big)
  \big(1+X-Q_PX-Q_P^2X^3\big)
}{
  \big(1-Q_PX\big)
  \big(1-Q_P^2X^3\big)
}
= \frac{
  \big(1-X\big)
  \big(1+X+Q_PX^2\big)
}{
  1-Q_P^2X^3
}.
\end{equation}
According to \Cref{lem:local-global-principle}, the global generating function is
\begin{equation}
\label{eq:C3-cond-global}
\Fcond_K(X)
= \prod_{P\in M_K} \Fcond_{K_P}\left(X^{\deg(P)}\right).
\end{equation}
\begin{theorem}
\label{thm:no-meromorphic-continuation}
Let $G = C_3$ and let $K=\F_q(T)$ be a rational function field of characteristic $p=3$. Then, the generating function $\Fcond_K(X)$ has a meromorphic continuation to the open disc of radius $q^{-1/2}$ centered at the origin. Every point on the boundary of this disc is an accumulation point of poles and zeros of the continuation. (Hence, the function cannot be meromorphically continued to any larger domain.)
\end{theorem}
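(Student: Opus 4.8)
The plan is to separate the rational part of the Euler product \eqref{eq:C3-cond-global} and to show that what remains is meromorphic on $|X|<q^{-1/2}$ with zeros and poles accumulating on the bounding circle. Using the Euler product $Z_K(W)=\prod_P(1-W^{\deg P})^{-1}$, the factors $(1-X^{\deg P})$ and $(1-(q^2X^3)^{\deg P})^{-1}$ in \eqref{eq:C3-cond-local} collapse to $Z_K(q^2X^3)/Z_K(X)$, leaving
\[
\Fcond_K(X)=\frac{Z_K(q^2X^3)}{Z_K(X)}\,H(X),\qquad H(X):=\prod_{P\in M_K}\bigl(1+X^{\deg P}+q^{\deg P}X^{2\deg P}\bigr),
\]
so everything reduces to $H$. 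Over $\C$ each local factor of $H$ splits as $(1-\gamma_dX^d)(1-\bar\gamma_dX^d)$ with $d=\deg P$, where $\gamma_d,\bar\gamma_d$ are the roots of $q^dt^2+t+1$; since $\gamma_d+\bar\gamma_d=-1$ and $|\gamma_d|=q^{d/2}$, the zeros of the individual factors lie exactly on $|X|=q^{-1/2}$, which already pinpoints the candidate boundary. Crucially, the arguments of the $\gamma_d$ drift with $d$ (they do not have the degree-homogeneous form $\rho^d$), which is precisely what prevents $H$ from being rational.

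To construct the continuation I would pass to logarithms. The power sums $p_k(d):=\gamma_d^k+\bar\gamma_d^k$ obey Newton's recurrence $p_k=-p_{k-1}-q^d p_{k-2}$, hence are polynomials in $q^d$ of degree $\lfloor k/2\rfloor$ with explicit coefficients $c_{k,j}$, so
\[
\log H(X)=-\sum_{k\ge1}\frac1k\sum_{j=0}^{\lfloor k/2\rfloor}c_{k,j}\,\Pi\bigl(q^jX^k\bigr),\qquad \Pi(W):=\sum_{P}W^{\deg P}.
\]
Substituting $\Pi(W)=-\sum_{m\ge1}\tfrac{\mu(m)}m\bigl(\log(1-W^m)+\log(1-qW^m)\bigr)$ and re-collecting by singular locus rewrites $\log H$ as $\sum_{a,b}\lambda_{a,b}\log(1-q^aX^b)$, a sum over the loci $\{q^aX^b=1\}$ of radius $q^{-a/b}$. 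Only loci with $a/b>\tfrac12$ lie inside the disc, and a short estimate (bounding $km$ for the triples that reach a given radius) shows that each compact subdisc meets only finitely many of them; granting this, $H(X)=\prod_{a,b}(1-q^aX^b)^{\lambda_{a,b}}$ is a locally finite product and supplies the meromorphic continuation to $|X|<q^{-1/2}$.

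The crux, and the step I expect to be the main obstacle, is to prove that every exponent $\lambda_{a,b}$ is an \emph{integer}; otherwise $H$ would acquire branch points and fail to be meromorphic. This is an arithmetic identity obtained by summing the coefficients $\tfrac{c_{k,j}\mu(m)}{km}$ over all triples hitting a fixed locus, and I would establish it by a Lambert-series computation. For the loci approaching the boundary it is already transparent: the even terms $k=2l$ contribute to $\Lambda_N=\{q^{1+N}X^{2N}=1\}$ with total exponent $\tfrac1N\sum_{m\mid N}\mu(m)(-1)^{N/m}$, which the identity $\sum_m\mu(m)\tfrac{x^m}{1+x^m}=x-2x^2$ forces to vanish for all $N\ge3$; whereas the odd terms $k=2l-1$ contribute to $\Omega_l=\{q^lX^{2l-1}=1\}$, where a coprimality argument ($\gcd(l,2l-1)=1$ rules out all but one triple) shows the exponent is exactly $(-1)^l$ — a simple pole when $l$ is odd and a simple zero when $l$ is even. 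Verifying integrality uniformly over the interior loci is the genuinely technical part.

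The natural boundary then follows from the family $\Omega_l$. Its radius $q^{-l/(2l-1)}$ increases to $q^{-1/2}$, and $\Omega_l$ consists of $2l-1$ points equidistributed in argument, the even $l$ producing zeros and the odd $l$ producing poles. Hence every point of the circle $|X|=q^{-1/2}$ is simultaneously an accumulation point of zeros and of poles of the continuation, which — since these are isolated for a meromorphic function — precludes any meromorphic continuation across the circle. The finitely many zeros and poles of the prefactor $Z_K(q^2X^3)/Z_K(X)$, located at the fixed radii $q^{-2/3}$, $q^{-1}$, and $1$, cannot disturb this accumulation.
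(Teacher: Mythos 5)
Your starting point coincides with the paper's: strip off the rational factor $Z_K(q^2X^3)/Z_K(X)$ and analyze the ``stray'' Euler product $H(X)=\prod_P\bigl(1+X^{\deg P}+(qX^2)^{\deg P}\bigr)$. Your analysis of the boundary also agrees with the paper's conclusions: the inner singular loci are $\{q^aX^{2a-1}=1\}$ with alternating exponent $(-1)^a$ (in the paper these arise from the factors $Z_K(q^{a-1}X^{2a-1})^{(-1)^a}$ in \Cref{eq:meromorphic-approximation}), their radii $q^{-a/(2a-1)}$ increase to $q^{-1/2}$, the points of each locus equidistribute in argument, and the $\Lambda_N$-family dies out for $N\geq3$; so both zeros and poles accumulate at every boundary point. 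That part is sound.

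The genuine gap is in how you produce the meromorphic continuation on the \emph{full} open disc $|X|<q^{-1/2}$. You propose to write $H(X)=\prod_{a,b}(1-q^aX^b)^{\lambda_{a,b}}$ and appeal to ``local finiteness.'' Two issues arise. First, the integrality of the $\lambda_{a,b}$, which you flag as the crux: this is in fact standard (any $W\in\Z[[x,y]]$ with constant term $1$ has a unique expansion $\prod_{i,j}(1-x^iy^j)^{-e_{ij}}$ with $e_{ij}\in\Z$, by induction on a monomial order; apply it to $W=1+x+yx^2$), so your Lambert-series computations are not needed. Second, and more seriously, local finiteness of the \emph{singular} loci inside the disc does not make the product converge: the infinitely many non-singular factors must also converge, and they do not up to the boundary. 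Concretely, the central binomial coefficients in $\log(1+X+qX^2)=\sum_k\tfrac{(-1)^{k-1}}{k}\sum_j\binom{k}{j}q^jX^{k+j}$ force $|\lambda_{a,3a}|\asymp 4^a a^{-3/2}$, while $|q^aX^{3a}|\approx q^{-a/2}$ for $|X|$ near $q^{-1/2}$; since $4>\sqrt{3}$, the sum $\sum_{a,b}|\lambda_{a,b}|\,|q^{a+1}X^b|$ diverges near the boundary for $q=3$ (indeed for all $q\leq 9$), so your product does not define the continuation on the whole disc. This is exactly what the paper's \Cref{lem:local-approximation} is designed to circumvent: it truncates the cyclotomic expansion of each local factor at a finite level $A$, shows the multiplicative error per place is $O_A(Q_P^{-1-1/A})$ and that the discarded part is zero- and pole-free for $|X|<Q_P^{-1/2}$, obtains a continuation to radius $q^{-(1+1/A)/2}$ for each $A$, and only then lets $A\to\infty$. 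Your proof needs this (or an equivalent) truncation-and-error-control step in place of the single infinite product; with it, the rest of your argument goes through.
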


\begin{remark}
For any $C_3$-extension $L$ of $K$, we have $\disc(L) = 2\cdot\cond(L)$ (as divisors!) by the conductor-discriminant formula. Hence, $\Fdisc_K(X) = \Fcond_K(X^2)$, which according to \Cref{thm:no-meromorphic-continuation} cannot be meromorphically continued beyond the open disc of radius $q^{-1/4}$.
\end{remark}

We will see that any meromorphic continuation of $\Fcond_K(X)$ beyond the disc of radius $q^{-1/2}$ is prevented by the ``stray'' Euler product $\prod_P (1+X+(qX^2)^{\deg(P)})$ on the right-hand side of \Cref{eq:C3-cond-global}.
The idea of the proof of \Cref{thm:no-meromorphic-continuation} is to approximate $\Fcond_K(X)$ by an infinite product of zeta functions (or rather its partial products) using the following lemma.
(This idea is of course not new. See for example \cite{estermann-functions-represented-by-dirichlet-series}.)
\begin{lemma}
\label{lem:local-approximation}
Let $G=C_3$ and let $P$ be a place of $K$. Let $A$ be a positive integer and let $\varepsilon=A^{-1}$. Define
\[
H_{P,A}(X)
:= \frac{
  \big(1-X\big)
  \big(1+Q_PX^2\big)
  \prod_{a=1}^A\big(1-Q_P^{a-1}X^{2a-1}\big)^{(-1)^a}
}{
  1-Q_P^2X^3
}.
\]
Then, the rational function $\Fcond_{K_P}(X) / H_{P,A}(X)$ satisfies
\[
|\Fcond_{K_P}(X) / H_{P,A}(X) - 1| \ll_A Q_P^{-1-\varepsilon} = q^{-(1+\varepsilon)\deg(P)}
\]
for $|X|\leq Q_P^{-(1+\varepsilon)/2}$ if $Q_P\gg_A 1$, and has no zeros and poles with $|X|<Q_P^{-1/2}$ for arbitrary $Q_P$. (The constants in $\ll_A$ and $\gg_A$ may depend on $A$, but not on the place $P$.)
\end{lemma}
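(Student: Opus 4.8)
The plan is to first cancel the factors that $\Fcond_{K_P}$ and $H_{P,A}$ share. Using \eqref{eq:C3-cond-local}, the factors $(1-X)$ and $(1-Q_P^2X^3)$ appear identically in numerator and denominator, so
\[
\frac{\Fcond_{K_P}(X)}{H_{P,A}(X)} = \frac{1+X+Q_PX^2}{(1+Q_PX^2)\prod_{a=1}^A\big(1-Q_P^{a-1}X^{2a-1}\big)^{(-1)^a}}.
\]
I would abbreviate the numerator over $(1+Q_PX^2)$ as $f(X):=1+\tfrac{X}{1+Q_PX^2}=1+\sum_{k\ge0}(-1)^kQ_P^kX^{2k+1}$ and the remaining product as $\Pi_A(X)$, so that the ratio is $f/\Pi_A$. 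The crucial observation, which motivates the precise shape of $H_{P,A}$, is that the terms linear in each logarithm $\log(1-Q_P^{a-1}X^{2a-1})$ add up to exactly the truncation $\sum_{k=0}^{A-1}(-1)^kQ_P^kX^{2k+1}$ of the geometric series defining $f-1$.

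For the estimate I would pass to logarithms and write $S_A:=\log f-\log\Pi_A$ as a sum of three pieces: (i) the tail $\sum_{k\ge A}(-1)^kQ_P^kX^{2k+1}$ of the series for $\tfrac{X}{1+Q_PX^2}$ left over after the linear-log terms are matched; (ii) the nonlinear part $\log(1+u)-u$ of $\log f$, where $u=\tfrac{X}{1+Q_PX^2}$; and (iii) the nonlinear parts $\sum_{n\ge2}$ of each $\log(1-Q_P^{a-1}X^{2a-1})$. In the disc $|X|\le Q_P^{-(1+\varepsilon)/2}$ one has $|Q_P^{a-1}X^{2a-1}|\le Q_P^{-1/2-(2a-1)\varepsilon/2}$ and $|u|\ll Q_P^{-(1+\varepsilon)/2}$, so for $Q_P\gg_A1$ every logarithm converges and $\Pi_A$ is bounded away from $0$ and $\infty$. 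The estimate then reduces to a monomial count: a monomial $Q_P^bX^m$ is bounded by $Q_P^{b-m(1+\varepsilon)/2}$ on this disc. For piece (ii) I would simply use $|\log(1+u)-u|\ll|u|^2\ll Q_P^{-1-\varepsilon}$; for (i) and (iii) I would check that the relevant exponents $b-m(1+\varepsilon)/2$ are all $\le-(1+\varepsilon)$, the extreme monomials being $Q_P^AX^{2A+1}$ in (i) and the $n=2$, $a=1$ term $\tfrac12X^2$ in (iii), which combines with (ii) to produce the leading term $-X^2$. Summing the finitely many factors and the geometric $n$-tails gives $|S_A|\ll_AQ_P^{-1-\varepsilon}$, whence $|f/\Pi_A-1|\ll_AQ_P^{-1-\varepsilon}$.

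For the zero-free claim I would locate the zeros and poles of $f/\Pi_A$ directly. The numerator $1+X+Q_PX^2$ has (for $Q_P>\tfrac14$) two complex-conjugate roots whose product is $1/Q_P$, hence both lie on $|X|=Q_P^{-1/2}$; likewise the roots of $1+Q_PX^2$ lie on $|X|=Q_P^{-1/2}$. Each factor $1-Q_P^{a-1}X^{2a-1}$ has all its roots on $|X|=Q_P^{(1-a)/(2a-1)}$, and the elementary inequality $(1-a)/(2a-1)>-\tfrac12$ (valid for every $a\ge1$) shows these lie strictly outside the disc $|X|<Q_P^{-1/2}$. Hence $f/\Pi_A$ has neither zeros nor poles in $|X|<Q_P^{-1/2}$, for every $Q_P>1$, in particular for every $Q_P=q^{\deg(P)}$.

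The main obstacle I anticipate is the uniform bookkeeping in the logarithmic estimate: one must verify that every monomial contributes at most $Q_P^{-1-\varepsilon}$ and that the implied constant depends only on $A$ and not on $P$, which hinges on there being only finitely many factors and on the per-factor exponents being strictly negative so that the $n\ge2$ tails sum geometrically. Once the decomposition of $S_A$ into its three pieces is set up, the remaining exponent inequalities are routine but must be checked with the correct dependence $\varepsilon=A^{-1}$.
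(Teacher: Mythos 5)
Your proposal is correct, and it follows the same overall strategy as the paper (cancel the common factors $(1-X)$ and $(1-Q_P^2X^3)$, compare logarithms on the disc $|X|\leq Q_P^{-(1+\varepsilon)/2}$, check that the designed cancellation leaves only terms of size $O_A(Q_P^{-1-\varepsilon})$, and exponentiate), but the decomposition of the logarithm is genuinely different. The paper expands $-\log(1+X+QX^2)$ and $-\log(1+QX^2)$ separately as Taylor series in $X+QX^2$ and $QX^2$, truncated at order $A$, and matches the leading term $Q^{a-1}X^{2a-1}=X(QX^2)^{a-1}$ of each $-\log(1-Q^{a-1}X^{2a-1})$ against the binomial expansion, so that the surviving terms are $\sum_{b=2}^a\binom{a}{b}X^b(QX^2)^{a-b}\ll_A Q^{-1-\varepsilon}$. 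You instead combine the two logarithms into $\log(1+u)$ with $u=X/(1+QX^2)$, expand $u$ as the geometric series $\sum_{k\geq0}(-1)^kQ^kX^{2k+1}$, and observe that the linear parts of the $\log(1-Q^{a-1}X^{2a-1})$ terms reproduce exactly its first $A$ terms; the error then splits into the series tail, the quadratic-and-higher part of $\log(1+u)$, and the $n\geq2$ tails of the remaining logarithms, each $\ll_A Q^{-1-\varepsilon}$ by the monomial bounds you state (all of which check out, the borderline contributions being of order $|X|^2\leq Q^{-1-\varepsilon}$ and the $k=A$ tail term of order $Q^{-A\varepsilon-(1+\varepsilon)/2}=Q^{-3/2-\varepsilon/2}$). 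Your version makes the design principle behind $H_{P,A}$ more transparent, at the cost of having to justify the substitution $u=X/(1+QX^2)$ and its series on the relevant disc; the paper's version stays entirely within polynomial identities inside the logarithms. For the zero/pole-free claim you also give more detail than the paper (which just asserts it from the formula): your Vieta argument for $1+X+Q_PX^2$ and the inequality $(1-a)/(2a-1)>-1/2$ are exactly the right checks.
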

\begin{proof}
Let $Q := Q_P$. By \Cref{eq:C3-cond-local}, we have
\[
\frac{\Fcond_{K_P}(X)}{H_{P,A}(X)}
= \frac{
  1+X+QX^2
}{
  \big(1+QX^2\big)
  \prod_{a=1}^A\big(1-Q^{a-1}X^{2a-1}\big)^{(-1)^a}
}.
\]
One can see from this formula that the quotient has no zeros or poles with $|X|<Q^{-1/2}$.
Using $|X|\leq Q^{-(1+\varepsilon)/2}$, which is equivalent to $|QX^2|\leq Q^{-\varepsilon}$, we compute
\begin{align*}
R
&:= -\log\left(1+X+QX^2\right)
= \sum_{a=1}^\infty \frac1a
  \big(\underbrace{
    -X-QX^2
  }_{
    \ll Q^{-\varepsilon}
  }\big)^a
= \sum_{a=1}^A \frac1a
  \left(
    -X-QX^2
  \right)^a
  + \O_A\left(Q^{-1-\varepsilon}\right), \\
S
&:= -\log\left(1+QX^2\right)
= \sum_{a=1}^\infty \frac1a
  \big(\underbrace{
    -QX^2
  }_{
    \ll Q^{-\varepsilon}
  }\big)^a
= \sum_{a=1}^A \frac1a
  \left(
    -QX^2
  \right)^a
  + \O_A\left(Q^{-1-\varepsilon}\right), \\
T_a
&:= - \log\left(1-Q^{a-1}X^{2a-1}\right)
= \sum_{k=1}^\infty \frac1k
  \big(\underbrace{
    Q^{a-1}X^{2a-1}
  }_{
    \ll Q^{-(1+\varepsilon)/2}
  }\big)^k
= Q^{a-1}X^{2a-1}
  + \O_A\left(Q^{-1-\varepsilon}\right).
\end{align*}
(Here, the assumption $Q \gg_A 1$ was used to ensure that we are in the domain of convergence of the Taylor expansion $-\log(1-x)=\sum_{i\geq1}x^i/i$.)
Thus,
\begin{align*}
&-\log(\Fcond_{K_P}(X)/H_{P,A}(X))
= R-S-\sum_{a=1}^A (-1)^a T_a \\
&= \sum_{a=1}^A \frac{(-1)^a}{a}
  \big[\underbrace{
    (X+QX^2)^a
    - (QX^2)^a
    - a X (QX^2)^{a-1}
  }_{
    = \sum_{b=2}^a \binom{a}{b} X^b (QX^2)^{a-b}
    \ll_A Q^{-1-\varepsilon}
  }\big]
  + \O_A\left(Q^{-1-\varepsilon}\right)
\ll_A Q^{-1-\varepsilon}.
\end{align*}
The claim follows by exponentiation.
\end{proof}

\begin{proof}[Proof of \Cref{thm:no-meromorphic-continuation}]
Let $A$ and $\varepsilon$ as in \Cref{lem:local-approximation}.
For $|X|<q^{-(1+\varepsilon)/2}$, the product
\[
\prod_P \frac{\Fcond_{K_P}(X^{\deg(P)})}{H_{P,A}(X^{\deg(P)})}
= \prod_{P:\ Q_P\gg_A 1} \left(1+\O_A(q^{-(1+\varepsilon)\deg(P)})\right)
\cdot \prod_{P:\ \textnormal{not } Q_P\gg_A 1} \frac{\Fcond_{K_P}(X^{\deg(P)})}{H_{P,A}(X^{\deg(P)})}
\]
has no zeros or poles. On the other hand,
\begin{equation}
\label{eq:meromorphic-approximation}
\begin{aligned}
&\prod_{P\in M_K} H_{P,A}\left(X^{\deg(P)}\right) \\
&= \prod_{P\in M_K} \frac{
    \big(1-X^{\deg(P)}\big)
    \big(1+(qX^2)^{\deg(P)}\big)
    \prod_{a=1}^A \big(1-(q^{a-1}X^{2a-1})^{\deg(P)}\big)^{(-1)^a}
  }{
    1-(q^2X^3)^{\deg(P)}
  } \\
&= \frac{
    Z_K\big(qX^2\big)
    Z_K\big(q^2X^3\big)
  }{
    Z_K\big(X\big)
    Z_K\big(q^2X^4\big)
    \prod_{a=1}^A Z_K\big(q^{a-1}X^{2a-1}\big)^{(-1)^a}
  },
\end{aligned}
\end{equation}
which is meromorphic on the entire complex plane. Hence, $\Fcond_K(X) = \prod_P \Fcond_{K_P}(X^{\deg(P)})$ can be meromorphically continued to the domain $\{|X|<q^{-(1+\varepsilon)/2}\}$ and it has the same zeros and poles in this domain as the right-hand side of \Cref{eq:meromorphic-approximation}.

The factor $Z_K(q^{a-1} X^{2a-1}) = (1-q^{a-1}X^{2a-1})^{-1} (1-q^a X^{2a-1})^{-1}$ has no zeros and has poles at exactly the $2a-1$ points $X$ with $q^{a-1} X^{2a-1}=1$ as well as the $2a-1$ points $X$ with $q^a X^{2a-1}=1$. The points with $q^{a-1} X^{2a-1}=1$ have absolute value $q^{-(a-1)/(2a-1)} > q^{-1/2}$. The points with $q^a X^{2a-1}=1$ have absolute value $q^{-a/(2a-1)} < q^{-1/2}$. The fractions $\frac{a}{2a-1}$ for $a=1,2,\dots$ are distinct since they are all reduced. The remaining (finitely many) factors on the right-hand side of \Cref{eq:meromorphic-approximation} have no zeros and just a fixed finite number of poles.

Every point $X$ on the circle $|X|=q^{-1/2}$ is an accumulation point of the set $\bigsqcup_{a\geq1}\{X\in\C : q^a X^{2a-1}=1\}$ since $q^{-a/(2a-1)}\ra q^{-1/2}$ for $a\ra\infty$. The claim follows by letting $A\ra\infty$ (and therefore $\varepsilon\ra0$).
\end{proof}

\begin{remark}
For $G=C_2$, the generating function $\Fcond_K(X)=\Fdisc_K(X)$ still happens to be rational for any rational function field $K=\F_q(T)$ of characteristic $2$.
\end{remark}

\bibliographystyle{alphaurl}
\bibliography{references.bib}

\end{document}